\newtheorem{theorem}{Theorem}[section]
\newtheorem{lemma}[theorem]{Lemma}
\newtheorem{proposition}[theorem]{Proposition}
\newtheorem{corollary}[theorem]{Corollary}
	\newcommand{\dd}{\delta}
\theoremstyle{remark}
\newtheorem{definition}[theorem]{Definition}
\numberwithin{equation}{section}
\begin{document}

\baselineskip=15.5pt

\title[Brauer group of moduli of Higgs bundles and connections]{Brauer
group of moduli of Higgs bundles and connections}

\author[D. Baraglia]{David Baraglia}

\address{School of Mathematical Sciences, The University of Adelaide, 
Adelaide SA 5005, Australia}

\email{david.baraglia@adelaide.edu.au}

\author[I. Biswas]{Indranil Biswas}

\address{School of Mathematics, Tata Institute of Fundamental
Research, Homi Bhabha Road, Bombay 400005, India}

\email{indranil@math.tifr.res.in}

\author[L. P. Schaposnik]{Laura P. Schaposnik}

\address{Department of Mathematics, University of Illinois, Chicago, IL 60607, USA}

\email{schapos@illinois.edu}

\subjclass[2000]{14F22, 14H60.}

\keywords{Brauer group. Higgs bundles, connections, character variety.}

\date{}

\begin{abstract}
Given a compact Riemann surface $X$ and a semisimple affine algebraic group $G$
defined over $\mathbb C$, there are moduli spaces of Higgs bundles and of
connections associated to $(X,\, G)$. We compute the Brauer group of the
smooth locus of these varieties.
\end{abstract}

\maketitle

\section{Introduction}\label{sec1}

We dedicate this paper to the study of the Brauer group of the moduli spaces of certain Higgs bundles and of holomorphic connections on a Riemann surface. 
Recall that given a complex quasiprojective variety $Z$, its Brauer group $\text{Br}(Z)$ is the Morita equivalence classes of Azumaya algebras over $Z$. This group
coincides with the equivalence classes of principal $\text{PGL}$--bundles over $Z$,
where two principal $\text{PGL}$--bundles
$P$ and $Q$ are identified if there are vector bundles $V$ and $W$ over
$Z$ such that the two principal $\text{PGL}$--bundles $P\otimes {\mathbb P}(V)$ and
$Q\otimes {\mathbb P}(W)$ are isomorphic. The cohomological Brauer group
$\text{Br}'(Z)$ of the variety $Z$ is the torsion part of the \'etale cohomology group
$H^2(Z,\, {\mathbb G}_m)$. There is a natural injective homomorphism
$\text{Br}(Z)\,\longrightarrow\, \text{Br}'(Z)$ which is in fact an isomorphism
by a theorem of Gabber \cite{dJ}, \cite{Ho}.

Consider now a compact connected Riemann surface $X$ of genus $g\geq 3$. Given a fixed
base point $x_0$ and two integers $r\, \geq\,2$ and
$\dd$, we let 
${\mathcal M}_C$ denote the moduli space of all logarithmic connections
$(E,\, D)$ on $X$, singular over $x_0$, satisfying the following four natural conditions:
\begin{enumerate}
\item[I.] $E$ is a holomorphic vector bundle on $X$ of rank $r$ with
$\bigwedge\nolimits^r E\,=\, {\mathcal O}_X(\dd x_0)$,

\item[II.] the logarithmic connection on $\bigwedge\nolimits^r E\,=\,
{\mathcal O}_X(\dd x_0)$
induced by $D$ coincides with the connection on ${\mathcal O}_X(\dd x_0)$ defined
by the de Rham differential,

\item[III.] the residue of $D$ at $x_0$ is $-\frac{\dd}{r}\text{Id}_{E_{x_0}}$, and

\item[IV.] there is no holomorphic subbundle $F\, \subset\, E$ with
$1\,\leq\,\text{rank}(F)\, <\, r$ such that $D$ preserves $F$. 
\end{enumerate}
This moduli space ${\mathcal M}_C$ has a natural projective bundle
once we fix a point of $X$,
$${\mathbb P}_C\, \longrightarrow\, {\mathcal M}_C\, ,$$
through which in Section \ref{hola} we study the Brauer group ${\rm Br}({\mathcal M}_C)$: 

\noindent {\bf Theorem \ref{theo1}.}
\textit{The Brauer group ${\rm Br}({\mathcal M}_C)$ is isomorphic to ${\mathbb 
Z}/\tau\mathbb Z$, where $\tau\,=\, {\rm g.c.d.}(r,\dd)$. The group ${\rm 
Br}({\mathcal M}_C)$ is generated by the class of ${\mathbb P}_C$.}

Fixing the compact connected Riemann surface $X$ and the invariant $\dd$, one can 
also compute the analytic Brauer group of the $\text{SL}(r,{\mathbb C})$--character 
variety ${\mathcal R}$ associated to the pair $(X,\, \dd)$.

\noindent{\bf Theorem \ref{thm1}.} {\it The analytic cohomological Brauer group ${\rm Br}'_{\rm an}({\mathcal R})$ is
isomorphic to a quotient of the cyclic group ${\mathbb Z}/\tau
\mathbb Z$, where $\tau\,=\, {\rm g.c.d.}(r,\dd)$. The group
${\rm Br}_{\rm an}({\mathcal R})$ is generated by
the class of a naturally associated projective bundle $\mathbb{P}_R$.}

Over the compact connected Riemann surface $X$ one may also consider the moduli space 
 ${\mathcal M}_H$ of stable
Higgs bundles on $X$ of the form $(E,\, \Phi)$, where
\begin{itemize}\item $E$ is a holomorphic vector bundle
of rank $r$ with $\bigwedge\nolimits^r E\,=\, {\mathcal O}_X(\dd x_0)$, and

\item $\Phi$ is a
Higgs field on $X$ with $\text{trace}(\Phi)\,=\, 0$.
\end{itemize}
The moduli space ${\mathcal M}_H$ is a
smooth quasiprojective variety which also has a natural projective bundle
$${\mathbb P}_H\, \longrightarrow\, {\mathcal M}_H$$
once we fix a point of $X$.
In Section \ref{sec2.3} we study the Brauer group of ${\mathcal M}_H$ and prove the following:

\noindent{\bf Proposition \ref{mas}.} {\it The group ${\rm Br}({\mathcal M}_H)$ is isomorphic to the cyclic
group ${\mathbb Z}/\tau\mathbb Z$, and it is generated by the class of ${\mathbb P}_H$. }

One should note that, as seen in Section \ref{more}, the results of Section 
\ref{hola} extend to the context of principal bundles. We shall conclude this paper 
by looking at our results in the context of Langlands duality in Section 
\ref{final}.

\section{Brauer group of some moduli spaces}\label{hola}

As in the introduction, we shall consider a compact connected Riemann surface 
$X$ of genus $g\,\geq\,3$ with a fixed base point $x_0\, \in\, X$, and denote by 
$K_X$ its canonical bundle.

\subsection{Brauer group of moduli spaces of connections}\label{se2.1}

A \textit{logarithmic connection} on $X$ singular over $x_0$ is a pair of the form $(E,\, D)$,
where $E$ is a holomorphic vector bundle on $X$ and
$$
D\,:\, E\, \longrightarrow\, E\otimes K_X\otimes {\mathcal O}_X(x_0)
$$
is a holomorphic differential operator of order one satisfying the Leibniz identity 
\begin{eqnarray}
D(fs) \,=\, f\cdot D(s) + s\otimes (df),
\end{eqnarray}
for all locally defined holomorphic functions $f$ on $X$ and
all locally defined holomorphic sections
$s$ of $E$. Note that the fiber $(K_X\otimes {\mathcal O}_X(x_0))_{x_0}$ is canonically
identified with $\mathbb C$ by sending any $c\, \in\, \mathbb C$ to the evaluation at $x_0$
of the locally defined section
$c\frac{dz}{z}$ of $K_X\otimes {\mathcal O}_X(x_0)$, where $z$ is any holomorphic function defined around
$x_0$ with $z(x_0)\,=\, 0$ and $(dz) (x_0)\,
\not=\, 0$. Moreover, the evaluation of $\frac{dz}{z}$ at $x_0$ does not depend on the choice
of the function $z$. Using this identification of $(K_X\otimes {\mathcal O}_X(x_0))_{x_0}$
with $\mathbb C$, for any logarithmic connection $D$ as above we have the linear
endomorphism of the fiber $E_{x_0}$ given by the composition
$$
E\, \stackrel{D}{\longrightarrow}\,E\otimes K_X\otimes {\mathcal O}_X(x_0)
\, \longrightarrow\, (E\otimes K_X\otimes {\mathcal O}_X(x_0))_{x_0}\,=\, E_{x_0}\, .
$$
This element of $\text{End}(E_{x_0})\,=\, E_{x_0}\otimes E^*_{x_0}$ is called the \textit{residue} of
$D$ (see \cite[p. 53]{De}), which we shall denote by ${\rm Res}(D,\,x_0)$.
Then, from \cite[pp. 16--17, Theorem 3]{Oh}, and \cite{De} one has \begin{equation}\label{e1}
\text{degree}(E)+ \text{trace}({\rm Res}(D,\,x_0)) \,=\, 0. 
\end{equation}
For notational convenience, we shall let $\mathbb{K}:=K_X\otimes {\mathcal O}_X(x_0)$.
\begin{definition}
A logarithmic connection $(E,\, D)$ as above is called \textit{semistable} (respectively,
\textit{stable}) if for every holomorphic subbundle $0\,\not=\, F\, \subsetneq\, E$ with
$D(F)\, \subset\, F\otimes \mathbb{K}$, the following inequality holds
$$
\frac{\text{degree}(F)}{\text{rank}(F)}\, \leq\, \frac{\text{degree}(E)}{\text{rank}(E)}\ \
{\rm \Big(respectively,~}\, \frac{\text{degree}(F)}{\text{rank}(F)}\, <\, \frac{\text{degree}(E)}{\text{rank}(E)}
{\rm \Big)}.
$$
 \end{definition}

As done in Section \ref{sec1}, fix two integers $r\, \geq\, 2$ and $\dd$, and if $g\,=\,3$, then assume
that $r\, > \, 2$. The holomorphic line bundle ${\mathcal O}_X(\dd x_0)$ on $X$ is equipped with
the logarithmic connection given by the de Rham differential $d$. This logarithmic connection
on ${\mathcal O}_X(\dd x_0)$ will be denoted by $D_0$. From \eqref{e1} it follows that the residue
of $D_0$ is $-\dd$.

In views of the notation introduced in Section \ref{sec1}, let $\overline{\mathcal M}_C$ denote the moduli space of logarithmic connections
$(E,\, D)$ on $X$, singular over $x_0$, satisfying the following three conditions:
\begin{enumerate}
\item[I.] $E$ is a holomorphic vector bundle on $X$ of rank $r$ with
$\bigwedge\nolimits^r E\,=\, {\mathcal O}_X(\dd x_0)$,

\item[II.] the logarithmic connection on $\bigwedge\nolimits^r E\,=\, {\mathcal O}_X(\dd x_0)$
induced by $D$ coincides with $D_0$ defined above, and

\item[III.] ${\rm Res}(D,\,x_0)\,=\, -\frac{\dd}{r}\text{Id}_{E_{x_0}}$.
\end{enumerate}
Note that from \eqref{e1}, the above condition on ${\rm Res}(D,\,x_0)$ implies that $(E,\, D)$ is automatically
semistable. Moreover, if $\dd$ is coprime to $r$, then $(E,\, D)$ is in fact stable. Since the residue
${\rm Res}(D,\,x_0)$ is a constant multiple of $\text{Id}_{E_{x_0}}$, the logarithmic connection on the
projective bundle $\mathbb{P}(E)$ induced by $D$ is actually regular at $x_0$. 

The above defined moduli space $\overline{\mathcal M}_C$ is a quasiprojective irreducible normal
variety, defined over $\mathbb C$, of dimension $2(r^2-1)(g-1)$. Let
\begin{equation}\label{e2}
{\mathcal M}_C\, \subset\,\overline{\mathcal M}_C
\end{equation}
be the Zariski open subset parametrizing the stable logarithmic connections. We note
that ${\mathcal M}_C$ is contained in the smooth locus of $\overline{\mathcal M}_C$ (in fact,
${\mathcal M}_C$ is the smooth locus of the space $\overline{\mathcal M}_C$).
We shall denote by $\text{Br}({\mathcal M}_C)$ the Brauer group of the
smooth variety ${\mathcal M}_C$ which, as mentioned in Section \ref{sec1}
is the Morita equivalence classes of Azumaya algebras over ${\mathcal M}_C$. The reader should refer to \cite{Gr1}, \cite{Gr2},
\cite{Gr3} \cite{Mi} for the definition as well as some properties of the Brauer group.

For any $(E,\, D)\, \in\, {\mathcal M}_C$, consider any $T\, \in\, H^0(X,\, \text{End}(E))$
which is flat with respect to the connection on $\text{End}(E)$ induced by $D$, or
equivalently, such that $D\circ T\,=\, (T\otimes\text{Id}_{\mathbb{K}})\circ D$. Then, for any $c\,
\in\, \mathbb C$, the kernel of $T-c\cdot \text{Id}_E\,\in\, H^0(X,\, \text{End}(E))$ is
preserved by $D$. Since $\text{kernel}(T-c\cdot \text{Id}_E)$ is either $E$ or $0$,
it follows that either $T\,=\, c\cdot \text{Id}_E$ or the endomorphism $T-c\cdot
\text{Id}_E$ is invertible. Now taking $c$ to be an eigenvalue of $T(x_0)$
it follows that $T\,=\, c\cdot \text{Id}_E$. Consequently, there is a universal projective bundle
\begin{equation}\label{pp}
\widetilde{\mathbb P}\,\longrightarrow\, X\times {\mathcal M}_C
\end{equation}
of relative dimension $r-1$ which is equipped with a relative holomorphic connection in the direction of $X$.

\begin{definition}
Given a point $x\, \in\, X$, let 
\begin{equation}\label{e3}
{\mathbb P}\,:=\, \widetilde{\mathbb P}\vert_{\{x\}\times {\mathcal M}_C}\,\longrightarrow\,{\mathcal M}_C
\end{equation}
be the projective bundle obtained by restricting $\widetilde{\mathbb P}$, and denote its class by \begin{equation}\label{e4}
\beta\,\in\, \text{Br}({\mathcal M}_C).
\end{equation}
\end{definition}

In order to study the Brauer group $\text{Br}({\mathcal M}_C)$ we shall first prove 
the following.

\begin{lemma}\label{lem1}
Let $Y$ be a simply connected smooth quasiprojective variety defined over
the complex numbers, $W$ an algebraic vector bundle on $Y$ and
$$
q\, :\, {\mathcal W}\, \longrightarrow\, Y
$$
a torsor on $Y$ for $W$. Then the pullback homomorphism
$$
q^*\, :\, {\rm Br}(Y)\, \longrightarrow\, {\rm Br}({\mathcal W})
$$
is an isomorphism.
\end{lemma}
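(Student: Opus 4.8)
The plan is to pass to cohomological Brauer groups and decouple the problem by coefficients. By Gabber's theorem recalled in the introduction the natural map $\mathrm{Br}\to\mathrm{Br}'$ is an isomorphism, so it suffices to show that $q^\ast\colon \mathrm{Br}'(Y)\to\mathrm{Br}'(\mathcal W)$ is an isomorphism, where $\mathrm{Br}'(Z)=H^2_{\mathrm{et}}(Z,\mathbb G_m)_{\mathrm{tors}}$. The geometric input I would record first is that, since $W$ is a vector bundle, its sheaf of sections has vanishing higher quasi-coherent cohomology over affines; hence the $W$-torsor $q\colon\mathcal W\to Y$ is Zariski-locally trivial, with fibre the affine space $\mathbb A^N$, $N=\operatorname{rank}W$. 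Thus over a suitable Zariski cover of $Y$ the morphism $q$ looks like the projection $U\times\mathbb A^N\to U$, and this is the only local model I will need.

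I would then treat the degree-one information using $\mathbb G_m$ directly. Over a trivialising open $U$ one has $\mathcal O(U\times\mathbb A^N)^\ast=\mathcal O(U)^\ast$ and $\mathrm{Pic}(\mathbb A^N_U)=\mathrm{Pic}(U)$ (homotopy invariance of $\mathrm{Pic}$ for the smooth, hence regular, variety $U$); these glue to $q_\ast\mathbb G_m=\mathbb G_m$ and $R^1q_\ast\mathbb G_m=0$. The five-term exact sequence of the Leray spectral sequence for $q$ and $\mathbb G_m$ then yields an isomorphism $q^\ast\colon\mathrm{Pic}(Y)\to\mathrm{Pic}(\mathcal W)$. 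I would emphasize that one \emph{cannot} simply push this computation into degree two with $\mathbb G_m$-coefficients, because the relative term $R^2q_\ast\mathbb G_m$ involves the Brauer groups of the fibres $\mathbb A^N$ over the (non-algebraically-closed) residue fields of $Y$, which need not vanish; this is exactly why one must change coefficients for the $H^2$-comparison.

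For degree two I would therefore use finite coefficients. Fix an integer $n$ and consider $\mu_n$. On a trivialising cover, $\mathbb A^1$-invariance of étale cohomology with $\mu_n$-coefficients gives $R(q|_U)_\ast\mu_n=\mu_n$; sheafifying, these glue to $R^0q_\ast\mu_n=\mu_n$ and $R^iq_\ast\mu_n=0$ for $i>0$, so the Leray spectral sequence degenerates to edge isomorphisms $q^\ast\colon H^k_{\mathrm{et}}(Y,\mu_n)\xrightarrow{\ \sim\ }H^k_{\mathrm{et}}(\mathcal W,\mu_n)$ for all $k$ (equivalently, one may invoke Artin comparison together with the fact that $\mathcal W^{\mathrm{an}}\to Y^{\mathrm{an}}$ is a fibre bundle with contractible fibre $\mathbb C^N$, hence a homotopy equivalence). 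I would now combine the two isomorphisms through the Kummer sequence: for any smooth $\mathbb C$-variety $Z$ and any $n$,
$$0\longrightarrow \mathrm{Pic}(Z)\otimes\mathbb Z/n\longrightarrow H^2_{\mathrm{et}}(Z,\mu_n)\longrightarrow \mathrm{Br}'(Z)[n]\longrightarrow 0 .$$
Mapping the sequence for $Y$ to that for $\mathcal W$ via $q^\ast$ gives a commutative ladder whose first two vertical arrows are the isomorphisms just established, so the five lemma forces $q^\ast\colon\mathrm{Br}'(Y)[n]\to\mathrm{Br}'(\mathcal W)[n]$ to be an isomorphism. Since $\mathrm{Br}'(Z)=\bigcup_n\mathrm{Br}'(Z)[n]$ is torsion, taking the union over all $n$ finishes the argument.

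The step I expect to be the main obstacle is the vanishing $R^iq_\ast\mu_n=0$ for $i>0$, i.e. the cohomological triviality of an \emph{affine-space torsor} for étale cohomology with finite coefficients: this is what genuinely distinguishes the torsor $\mathcal W$ from a product $Y\times\mathbb A^N$, and it must be extracted from Zariski-local triviality plus $\mathbb A^1$-invariance (or, analytically, from the contractibility of the fibres). Once this acyclicity is in place, everything else is a formal diagram chase. The hypothesis that $Y$ be simply connected is convenient here, as it forces $H^1(Y,\mathbb Z)=0$ and hence $H^1(\mathcal W,\mathbb Z)=0$, keeping the Picard comparison and the torsion bookkeeping transparent.
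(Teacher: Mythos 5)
Your proof is correct, but it takes a genuinely different route from the paper's. The paper compactifies the torsor fibrewise: it realizes $\mathcal W$ as $P(\widehat W)\setminus P(W)$, where $\widehat W$ is the extension of ${\mathcal O}_Y$ by $W$ classified by the torsor class in $H^1(Y,W)$, then quotes Gabber's theorem that $\mathrm{Br}(P(\widehat W))\cong\mathrm{Br}(Y)$ for projective bundles together with Ford's localization sequence
$0\to\mathrm{Br}(P(\widehat W))\to\mathrm{Br}(\mathcal W)\to H^1(P(W),\mathbb Q/\mathbb Z)$,
and kills the last term because $P(W)$ inherits simple connectedness from $Y$. You instead work entirely on the cohomological side: Zariski-local triviality of the torsor, the identifications $q_*\mathbb G_m=\mathbb G_m$ and $R^1q_*\mathbb G_m=0$ for the Picard comparison, acyclicity of $q$ for $\mu_n$-coefficients (which you correctly single out as the crux, and which is indeed available either stalkwise from $\mathbb A^1$-invariance on a trivializing cover or from Artin comparison plus contractibility of the fibres of $\mathcal W^{\mathrm{an}}\to Y^{\mathrm{an}}$), and the Kummer ladder with the five lemma. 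Both arguments are complete. What each buys: the paper's route is short given its two references and stays within Azumaya-algebra technology, but it is precisely where the simple-connectedness hypothesis is consumed (to kill $H^1(P(W),\mathbb Q/\mathbb Z)$); your route shows that hypothesis is superfluous --- the conclusion is just homotopy invariance of $\mathrm{Br}'$ for smooth quasiprojective varieties --- at the cost of the spectral-sequence bookkeeping. Two minor points you should make explicit: to pass back from $\mathrm{Br}'$ to $\mathrm{Br}$ you invoke Gabber's theorem for $\mathcal W$ as well as for $Y$, so note that $\mathcal W$ is quasiprojective (it is affine over $Y$, or embed it in $P(\widehat W)$); and your closing remark about needing $\pi_1(Y)=1$ for ``torsion bookkeeping'' is a red herring, since your argument never uses it.
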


\begin{proof}
Let $c\, \in\, H^1(Y,\, W)$ be the class of ${\mathcal W}$. Consider the extension of
${\mathcal O}_Y$ by $W$
\begin{equation}\label{xi}
0\, \longrightarrow\, W\, \longrightarrow\, \widehat{W} \, \stackrel{\xi}{\longrightarrow}
\, {\mathcal O}_Y \, \longrightarrow\, 0
\end{equation}
associated to the cohomology class $c$. We shall denote by $1_Y$ the image of the section of ${\mathcal O}_Y$
defined by the constant function $1$ on $Y$. Then, the inverse image
$\xi^{-1}(1_Y)\, \subset\, \widehat{W}$ under the projection $\xi$ in
\eqref{xi}, is a torsor on $Y$ for the vector bundle $W$.
This $W$--torsor is isomorphic to the $W$--torsor ${\mathcal W}$.

Let
$$\widehat{\eta}\, :\, P(\widehat{W}) \longrightarrow\, Y\ \ \text{ and }\ \ 
\eta\, :\, P(W)\longrightarrow\, Y$$
be the projective bundles on $Y$ parametrizing the lines in the fibers of
$\widehat{W}$ and $W$ respectively. Note that $P(W)\, \subset\, P(\widehat{W})$, and
$$
\mathcal{W}\,=\,\xi^{-1}(1_Y)\,=\, P(\widehat{W})\setminus P(W)\, ,
$$
by sending any element of $\xi^{-1}(1_Y)$ to the line in $\widehat W$
generated by it. 
Now from \cite[p.~365, Lemma~0.1]{Fo} and \cite[p.~367, Theorem~1.1]{Fo} we
conclude that the there is an exact sequence
\begin{equation}\label{xi2}
0\, \longrightarrow\, \text{Br}(P(\widehat{W})) \, \longrightarrow\,
\text{Br}(\mathcal{W})\, \longrightarrow\, H^1(P(W),\, {\mathbb Q}/{\mathbb Z})
\, \longrightarrow\, \ldots\, .
\end{equation}

Consider the long exact sequence of homotopy groups for the fiber bundle
$\eta$. The fibers of $P(W)$ are projective spaces and hence are
simply connected. Since $Y$ is also simply connected, from the homotopy exact
sequence for $\eta$ it follows that $P(W)$ is simply connected as well. Hence
$H_1(P(W),\, {\mathbb Z})\,=\, 0$, which implies that
$H^1(P(W),\, {\mathbb Q}/{\mathbb Z})\,=\, 0$ (universal coefficient theorem
for cohomology; see \cite[p.~195, Theorem~3.2]{Ha}).
Consequently, using \eqref{xi2} we conclude that
\begin{equation}\label{is.}
\text{Br}(P(\widehat{W})) \, =\,\text{Br}(\mathcal{W})
\end{equation}
with the isomorphism being induced by the inclusion of $\mathcal{W}$ in $P(\widehat{W})$.

The homomorphism $\widehat{\eta}^*\, :\, \text{Br}(Y)\, \longrightarrow\,\text{Br}
((P(\widehat{W}))$ induced by $\widehat{\eta}$
 is an isomorphism
\cite[p.~193, Theorem~2]{Ga}, and the lemma follows from \eqref{is.}.
\end{proof}

\begin{theorem}\label{theo1}
The Brauer group ${\rm Br}({\mathcal M}_C)$ is isomorphic to the cyclic
group ${\mathbb Z}/\tau\mathbb Z$, where $\tau\,=\, {\rm g.c.d.}(r,\dd)$, and it is generated by the element $\beta$ in \eqref{e4}.
\end{theorem}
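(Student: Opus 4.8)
The plan is to compute $\mathrm{Br}(\mathcal{M}_C)$ by relating it to the Brauer group of a simpler, better-understood moduli space via the torsor structure isolated in Lemma~\ref{lem1}. The key geometric observation is that the moduli space of logarithmic connections $\mathcal{M}_C$ fibers over the moduli space $\mathcal{M}(r,\dd)$ of stable vector bundles of rank $r$ and fixed determinant $\mathcal{O}_X(\dd x_0)$, by sending $(E,\,D)$ to the underlying bundle $E$. This forgetful map is not literally defined on all of $\mathcal{M}_C$ (not every $(E,D)\in\mathcal M_C$ has $E$ stable), so I would first restrict attention to the open locus $\mathcal{M}_C^0\subset\mathcal{M}_C$ lying over the stable bundles, and argue that the complement has high codimension. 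Since $\mathrm{Br}$ of a smooth variety is unchanged by removing a closed subset of codimension $\geq 2$ (by purity for the cohomological Brauer group, using Gabber's theorem $\mathrm{Br}=\mathrm{Br}'$ cited in the introduction), the computation on $\mathcal{M}_C^0$ determines $\mathrm{Br}(\mathcal{M}_C)$. This codimension estimate is where the genus hypothesis $g\geq 3$ (and $r>2$ when $g=3$) should enter.

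\smallskip

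Next I would identify the fibers of $\mathcal{M}_C^0\to\mathcal{M}(r,\dd)$. Over a stable bundle $E$, the set of logarithmic connections $D$ satisfying conditions I--III forms an affine space modelled on $H^0(X,\,\mathrm{End}^0(E)\otimes\mathbb{K})$, the space of trace-free $\mathbb K$-twisted endomorphisms, provided this fiber is nonempty; existence follows from the residue condition being compatible with \eqref{e1}. Thus $\mathcal{M}_C^0\to\mathcal{M}(r,\dd)$ is a torsor for the algebraic vector bundle $W$ on $\mathcal{M}(r,\dd)$ whose fiber over $E$ is $H^0(X,\,\mathrm{End}^0(E)\otimes\mathbb{K})$. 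The base $\mathcal{M}(r,\dd)$ is known to be a smooth, rational, hence simply connected, quasiprojective variety, so the hypotheses of Lemma~\ref{lem1} are met and the pullback $\mathrm{Br}(\mathcal{M}(r,\dd))\xrightarrow{\ \sim\ }\mathrm{Br}(\mathcal{M}_C^0)$ is an isomorphism. It therefore remains to invoke the known computation $\mathrm{Br}(\mathcal{M}(r,\dd))\cong\mathbb{Z}/\tau\mathbb{Z}$ with $\tau=\mathrm{g.c.d.}(r,\dd)$, due to Balaji--Biswas--Gabber--Nagaraj (or the analogous cited source), where the generator is the class of the projective bundle $\mathbb{P}(\mathcal{E})$ associated to a universal bundle restricted to $\{x\}\times\mathcal{M}(r,\dd)$.

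\smallskip

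Finally I would track the generator through these identifications to show it is $\beta$. Under the torsor projection $q:\mathcal{M}_C^0\to\mathcal{M}(r,\dd)$, the universal projective bundle $\widetilde{\mathbb{P}}$ of \eqref{pp} restricted to $\{x\}\times\mathcal{M}_C^0$ is precisely the pullback $q^*\mathbb{P}(\mathcal{E}|_{\{x\}\times\mathcal M(r,\dd)})$, since the projectivization of a connection's underlying bundle depends only on $E$. Hence $\beta$ from \eqref{e4} is the image under $q^*$ of the generator of $\mathrm{Br}(\mathcal{M}(r,\dd))$, and because $q^*$ is an isomorphism, $\beta$ generates $\mathrm{Br}(\mathcal{M}_C)\cong\mathbb{Z}/\tau\mathbb{Z}$. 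The main obstacle I anticipate is the codimension bound for $\mathcal{M}_C\setminus\mathcal{M}_C^0$: one must control the locus of stable connections whose underlying bundle is strictly semistable or unstable, and verify that the $\mathrm{Br}=\mathrm{Br}'$ purity argument applies to pass between the two open sets; the torsor identification and the transport of the generator, by contrast, should be essentially formal once the fibration is set up.
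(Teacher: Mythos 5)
Your proposal follows essentially the same route as the paper's proof: restrict to the open locus of connections whose underlying bundle is stable (with complement of codimension at least two, so that purity applies), exhibit the forgetful map to the moduli space $\mathcal{N}$ of stable bundles of fixed determinant as a torsor for a vector bundle over a simply connected base, apply Lemma~\ref{lem1}, and import ${\rm Br}(\mathcal{N})\cong{\mathbb Z}/\tau{\mathbb Z}$ together with its generator from Balaji--Biswas--Gabber--Nagaraj, tracking the universal projective bundle through the pullback. The only small imprecision is that the fibers of the forgetful map are affine spaces for $H^0(X,\,\mathrm{ad}(E)\otimes K_X)$ (the cotangent fiber of $\mathcal{N}$), not $H^0(X,\,\mathrm{End}^0(E)\otimes\mathbb{K})$, since fixing the residue at $x_0$ removes the pole; this does not affect the argument, which only needs a torsor for some vector bundle.
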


\begin{proof}
Let ${\mathcal N}$ denote the moduli space of stable vector bundles $V$ on $X$ of rank $r$ such that
$\bigwedge\nolimits^r V\,=\, {\mathcal O}_X(\dd x_0)$, which is a smooth quasiprojective irreducible complex
variety of dimension $(r^2-1)(g-1)$.
Moreover, let ${\mathcal U}\, \subset\, {\mathcal M}_C$ be the locus of all $(E,\, D)$ such that the underlying
holomorphic vector bundle $E$ is stable. Considering 
\begin{equation}\label{e5}
p\, :\, {\mathcal U}\,\longrightarrow\, \mathcal N
\end{equation}
the forgetful morphism that sends any $(E,\, D)$ to $E$, from the openness of the stability condition
(see \cite[p. 635, Theorem 2.8(B)]{Ma}) it follows that $\mathcal U$ is a Zariski open subset of ${\mathcal M}_C$.
Note that $p$ is surjective because from \cite{NS} one has that
any $V\, \in\, \mathcal N$ admits a unique logarithmic connection $D_V$ singular at $x_0$ such that
\begin{itemize}
\item[(a)] the residue of $D_V$ at $x_0$ is $-\frac{\dd}{r}\text{Id}_{V_{x_0}}$, and

\item[(b)] the monodromy of $D_V$ lies in $\text{SU}(r)$.
\end{itemize}
Moreover, a pair $(V,\, D_V)$ as above lies in ${\mathcal U}$. In fact, if $D'$ is a logarithmic connection on
$V$ singular at $x_0$ such that ${\rm Res}(D',\,x_0)\,=\, -\frac{\dd}{r}\text{Id}_{V_{x_0}}$,
and the logarithmic connection on $\bigwedge\nolimits^r V\,=\, {\mathcal O}_X(\dd x_0)$
induced by $D'$ coincides with $D_0$, then
clearly $(V,\, D')\, \in\, {\mathcal U}$. The space of all logarithmic connections $D'$ on $V$ satisfying
the conditions (a) and (b) is an affine space for the vector space
$H^0(X,\, \text{ad}(V)\otimes K_X)$, where $\text{ad}(V)\subset\, \text{End}(V)$ is the
subbundle of co-rank one defined by the sheaf of endomorphisms of trace zero.
Furthermore, $H^0(X,\, \text{ad}(V)\otimes K_X)$ is the fiber of the cotangent bundle $\Omega^1_{\mathcal N}$ over
the point $V\, \in\, \mathcal N$. Therefore, the morphism $p$ in \eqref{e5} makes $\mathcal U$ a torsor
over $\mathcal N$ for the vector bundle $\Omega^1_{\mathcal N}$.

From \cite[p. 301, Lemma 3.1]{BM1} and \cite[Lemma 3.1]{BM2} the complement ${\mathcal M}_C\setminus {\mathcal U}\, \subset\, {\mathcal M}_C$ is of codimension at least
two and thus the inclusion map
$\iota\, :\, {\mathcal U}\, \hookrightarrow\, {\mathcal M}_C$ produces an isomorphism of Brauer groups 
\begin{equation}\label{e6}
\iota^*\,:\, \text{Br}({\mathcal M}_C)\,\stackrel{\sim}{\longrightarrow}\, \text{Br}({\mathcal U})\, ;
\end{equation}
this follows from ``Cohomological purity'' \cite[p.~241, Theorem~VI.5.1]{Mi} (it also
follows from \cite[p.~292--293]{Gr2}).
Since $p$ in \eqref{e5} is a torsor on ${\mathcal U}$ for
a vector bundle, and ${\mathcal U}$ is simply connected
\cite[p.~266, Proposition~1.2(b)]{BBGN}, from Lemma \ref{lem1}
it follows that the map $p$ induces an isomorphism
$$
p^*\,:\, \text{Br}({\mathcal N})\,\stackrel{\sim}{\longrightarrow}\,
\text{Br}({\mathcal U})\, .
$$
Combining this with \eqref{e6} we get an isomorphism
\begin{equation}\label{e7}
(\iota^*)^{-1}\circ p^*\,:\,
\text{Br}({\mathcal N})\,\stackrel{\sim}{\longrightarrow}\, \text{Br}({\mathcal M}_C)\, .
\end{equation}

We know that $\text{Br}({\mathcal N})$ is cyclic of order $\tau\,=\,
{\rm g.c.d.}(r,\dd)$ \cite[p. 267, Theorem 1.8]{BBGN}. Therefore, from \eqref{e7} we conclude that
$\text{Br}({\mathcal M}_C)$ is isomorphic to ${\mathbb Z}/\tau\mathbb Z$.

Finally, in order to find a generator of $\text{Br}({\mathcal M}_C)$, let
$
\widehat{\mathbb P}\,\longrightarrow\, X\times{\mathcal N}
$
be the universal projective bundle and
$$
{\mathbb P}'\,:=\, \widehat{\mathbb P}\vert_{\{x\}\times {\mathcal N}}\,\longrightarrow\,{\mathcal N}
$$
be the projective bundle obtained by restricting $\widehat{\mathbb P}$, where $x$
is the point of $X$ in \eqref{e3}. The Brauer group
$\text{Br}({\mathcal N})$ is generated by the class of ${\mathbb P}'$
\cite[p. 267, Theorem 1.8]{BBGN}. The pulled back projective bundle
$(\text{Id}_X\times p)^*\widehat{\mathbb P}$ is identified with the restriction
$\widetilde{\mathbb P}\vert_{X\times {\mathcal U}}$, where
$\widetilde{\mathbb P}$ is the projective bundle in \eqref{pp}. This implies
that $p^*{\mathbb P}'$ is identified with the restriction
${\mathbb P}\vert_{\mathcal U}$. Since the class of ${\mathbb P}'$ generates $\text{Br}({\mathcal
N})$, from the isomorphism $(\iota^*)^{-1}\circ p^*$ in \eqref{e7} it follows that the class of ${\mathbb P}$
generates $\text{Br}({\mathcal M}_C)$.
\end{proof}

\subsection{Analytic Brauer group of representation spaces}

Consider now the free group $\Gamma$ generated by $2g$ elements $\{a_i\, ,b_i\}_{i=1}^g$, and let
\begin{equation}\label{ga}
\gamma\, :=\, \prod_{i=1}^g [a_i\, , b_i]\,=\, \prod_{i=1}^g (a_ib_ia^{-1}_ib^{-1}_i)\, \in\,
\Gamma
\end{equation}
be the product of the commutators. Then, one may consider the space of all homomorphisms $\text{Hom}(\Gamma ,\, 
\text{SL}(r,{\mathbb C}))$ from the group $\Gamma$ to $\text{SL}(r,{\mathbb C})$. Let
$$
\text{Hom}^{\dd}(\Gamma ,\, \text{SL}(r,{\mathbb C}))\, \subset\, \text{Hom}(\Gamma ,\, 
\text{SL}(r,{\mathbb C}))
$$
be all such homomorphisms $\rho$ satisfying the condition that
\[\rho(\gamma)\,=\,
\exp(2\pi\sqrt{-1}\dd/r)\cdot I_{r\times r}\, ,\]
 where $I_{r\times r}$ is the $r\times r$
identity matrix. The conjugation action of $\text{SL}(r,{\mathbb C})$ on itself produces an
action of $\text{SL}(r,{\mathbb C})$ on the variety
$\text{Hom}(\Gamma ,\, \text{SL}(r,{\mathbb
C}))$. Moreover, this action of $\text{SL}(r,{\mathbb C})$
on $\text{Hom}(\Gamma ,\, \text{SL}(r,{\mathbb C}))$ preserves the above subvariety
$\text{Hom}^{\dd}(\Gamma ,\, \text{SL}(r,{\mathbb C}))$. We shall denote by $\overline{\mathcal R}$ the geometric invariant
theoretic quotient
$$
\overline{\mathcal R}\, :=\, 
\text{Hom}^{\dd}(\Gamma\, , \text{SL}(r,{\mathbb C}))/\!\!/\text{SL}(r,{\mathbb C})\, ,
$$
which parametrizes all the closed orbits of $\text{SL}(r,{\mathbb C})$
in $\text{Hom}^{\dd}(\Gamma ,\, \text{SL}(r,{\mathbb C}))$.

The moduli space $\overline{\mathcal M}_C$ defined in Section \ref{se2.1} is biholomorphic to
$\overline{\mathcal R}$. After fixing a presentation of $\pi_1(X\setminus\{x_0\},\, x)$, we
have a map
$\overline{\mathcal M}_C\,\longrightarrow\, \overline{\mathcal R}$ that sends a flat
connection to its monodromy representation, and which is a biholomorphism. Indeed, it is
the inverse of the map that associates a flat bundle on $X\setminus\{x_0\}$ of rank $r$ to a
representation of \linebreak $\pi_1(X\setminus\{x_0\},\, x)$. Note that although
both $\overline{\mathcal M}_C$ and $\overline{\mathcal R}$ have natural algebraic
structures, the above biholomorphism between them is not an algebraic map.

A homomorphism $\rho\, :\, \Gamma\,\longrightarrow\, \text{SL}(r,{\mathbb C})$ is called
\textit{reducible} if $\rho(\Gamma)$ is contained in some proper parabolic subgroup of
$\text{SL}(r,{\mathbb C})$, equivalently, if $\rho(\Gamma)$ preserves some
proper nonzero subspace of ${\mathbb C}^r$ under the standard action of
$\text{SL}(r,{\mathbb C})$. If $\rho$ is not reducible, then it is called
\textit{irreducible}.

We shall denote by 
$$
{\mathcal R}\, \subset\,\overline{\mathcal R}
$$
the locus of irreducible homomorphisms in $\overline{\mathcal R}$. It is easy to
see that ${\mathcal R}$ is contained in the smooth locus of $\overline{\mathcal R}$ (in fact, $\mathcal R$ coincides with the smooth locus of $\overline{\mathcal R}$).
{}From the definitions of ${\mathcal M}_C$ and $\mathcal R$ it follows immediately that 
the above biholomorphism $\overline{\mathcal M}_C\,
\stackrel{\sim}{\longrightarrow}\, \overline{\mathcal R}$ produces a biholomorphism
\begin{equation}\label{e9}
{\mathcal M}_C\,\stackrel{\sim}{\longrightarrow}\, {\mathcal R}\, .
\end{equation}

Let ${\mathcal H}\, \subset\, \text{Hom}^{\dd}(\Gamma\, , \text{SL}(r,{\mathbb C}))$ be the
inverse image of ${\mathcal R}$; in other words, ${\mathcal H}$ is the locus of all elements of
$\text{Hom}^{\dd}(\Gamma\, , \text{SL}(r,{\mathbb C}))$ that are
irreducible homomorphisms. The quotient map
$$
{\mathcal H}\,\longrightarrow\, {\mathcal H}/\!\!/\text{SL}(r,{\mathbb C})\,=\,
{\mathcal R}
$$
makes ${\mathcal H}$ an algebraic principal $\text{PSL}(r,{\mathbb C})$--bundle over
${\mathcal R}$. We shall denote by
\begin{equation}\label{e10}
\mathbb{P}_R\, :=\, {\mathcal H}\times^{\text{PSL}(r,{\mathbb C})} \mathbb{C}{\mathbb P}^{r-1}
\,\longrightarrow\, {\mathcal R}
\end{equation}
the fiber bundle associated to the principal $\text{PSL}(r,{\mathbb C})$--bundle
${\mathcal H}\,\longrightarrow\,{\mathcal R}$ for the standard action of $\text{PSL}(r,{\mathbb C})$
on $\mathbb{C}{\mathbb P}^{r-1}$.

The analytic Brauer group of ${\mathcal R}$ is
defined to be the equivalence classes of holomorphic
principal $\text{PGL}$--bundles on ${\mathcal R}$ where two principal $\text{PGL}$--bundles
$P$ and $Q$ are equivalent if there are holomorphic vector bundles $V$ and $W$ on
${\mathcal R}$ such that the two principal $\text{PGL}$--bundles $P\otimes {\mathbb P}(V)$ and
$Q\otimes {\mathbb P}(W)$ are isomorphic. Moreover, the analytic cohomological Brauer
group of ${\mathcal R}$ is the torsion part of $H^2({\mathcal R},\,
{\mathcal O}^*_{\mathcal R})$ (see \cite{Sc}). Let ${\rm Br}_{\rm an}({\mathcal R})$
(respectively, ${\rm Br}'_{\rm an}({\mathcal R})$) denote the
analytic Brauer group (respectively, analytic cohomological Brauer group) of
${\mathcal R}$. Then, from \cite[p.~878]{Sc} one has that
$${\rm Br}_{\rm an}({\mathcal R})\, \subset\,
{\rm Br}'_{\rm an}({\mathcal R})\, .$$

\begin{theorem}\label{thm1}
The analytic cohomological Brauer group ${\rm Br}'_{\rm an}({\mathcal R})$ is
isomorphic to a quotient of the cyclic group ${\mathbb Z}/\tau
\mathbb Z$, where $\tau\,=\, {\rm g.c.d.}(r,\dd)$. Moreover, the group
${\rm Br}_{\rm an}({\mathcal R})$ is generated by
the class of the projective bundle $\mathbb{P}_R$ in \eqref{e10}.
\end{theorem}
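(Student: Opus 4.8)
The plan is to deduce the analytic statement from the algebraic computation of Theorem \ref{theo1} by exploiting the biholomorphism \eqref{e9}, together with a direct comparison of the algebraic and analytic cohomological Brauer groups of ${\mathcal M}_C$. Since \eqref{e9} is an isomorphism of complex manifolds, it identifies the sheaves ${\mathcal O}^*$ on the two spaces and therefore induces isomorphisms ${\rm Br}_{\rm an}({\mathcal M}_C)\,\stackrel{\sim}{\longrightarrow}\,{\rm Br}_{\rm an}({\mathcal R})$ and ${\rm Br}'_{\rm an}({\mathcal M}_C)\,\stackrel{\sim}{\longrightarrow}\,{\rm Br}'_{\rm an}({\mathcal R})$. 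Hence it suffices to analyze ${\rm Br}'_{\rm an}({\mathcal M}_C)$ and to locate the class of the projective bundle ${\mathbb P}$ of \eqref{e3} inside it.

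The next step is to produce a natural surjection from the algebraic cohomological Brauer group onto the analytic one, which I would do degree by degree through the Kummer sequences. In the \'etale topology, $1\to\mu_n\to{\mathbb G}_m\to{\mathbb G}_m\to1$ exhibits the $n$-torsion subgroup ${\rm Br}'({\mathcal M}_C)[n]$ as the cokernel of ${\rm Pic}({\mathcal M}_C)/n\to H^2_{\rm et}({\mathcal M}_C,\,\mu_n)$, while in the analytic topology $1\to\mu_n\to{\mathcal O}^*\to{\mathcal O}^*\to1$ exhibits ${\rm Br}'_{\rm an}({\mathcal M}_C)[n]$ as the cokernel of ${\rm Pic}_{\rm an}({\mathcal M}_C)/n\to H^2({\mathcal M}_C,\,{\mathbb Z}/n)$. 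By Artin's comparison theorem the two target groups are canonically isomorphic, and every algebraic line bundle is in particular a holomorphic one, so the image of ${\rm Pic}({\mathcal M}_C)$ is contained in that of ${\rm Pic}_{\rm an}({\mathcal M}_C)$. Therefore ${\rm Br}'_{\rm an}({\mathcal M}_C)[n]$ is a quotient of ${\rm Br}'({\mathcal M}_C)[n]$ for every $n$, and taking the union over all $n$ yields a surjection ${\rm Br}'({\mathcal M}_C)\twoheadrightarrow{\rm Br}'_{\rm an}({\mathcal M}_C)$. As ${\rm Br}'({\mathcal M}_C)={\rm Br}({\mathcal M}_C)\cong{\mathbb Z}/\tau{\mathbb Z}$ by Gabber's theorem and Theorem \ref{theo1}, it follows that ${\rm Br}'_{\rm an}({\mathcal M}_C)$, and hence ${\rm Br}'_{\rm an}({\mathcal R})$, is a quotient of ${\mathbb Z}/\tau{\mathbb Z}$. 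The quotient may be proper precisely because ${\rm Pic}_{\rm an}$ can be strictly larger than ${\rm Pic}$, which is the same failure of algebraicity that makes \eqref{e9} non-algebraic.

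To pin down the generator I would follow the class $\beta$ of \eqref{e4} through these maps. Because the surjection above is induced by analytification and $\beta$ generates ${\rm Br}'({\mathcal M}_C)$, its image generates ${\rm Br}'_{\rm an}({\mathcal M}_C)$. The essential geometric point is then to identify this image, transported by \eqref{e9}, with the class of ${\mathbb P}_R$. Under the Riemann--Hilbert correspondence realizing \eqref{e9}, a connection $(E,\,D)\in{\mathcal M}_C$ maps to its monodromy $\rho\in{\mathcal H}$, and the fiber $E_x$ is canonically the fiber at $x$ of the associated flat bundle, on which the fundamental group acts through $\rho$. Projectivizing and letting $(E,\,D)$ vary identifies the restriction ${\mathbb P}={\mathbb P}(E_x)$ of \eqref{e3} with the associated bundle ${\mathcal H}\times^{{\rm PSL}(r,{\mathbb C})}{\mathbb C}{\mathbb P}^{r-1}={\mathbb P}_R$ of \eqref{e10}. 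Consequently $[{\mathbb P}_R]$ generates ${\rm Br}'_{\rm an}({\mathcal R})$.

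Finally, since ${\mathbb P}_R$ is an honest holomorphic projective bundle, its class lies in the analytic Brauer group ${\rm Br}_{\rm an}({\mathcal R})\subseteq{\rm Br}'_{\rm an}({\mathcal R})$. As this class already generates the larger group, the inclusions $\langle[{\mathbb P}_R]\rangle\subseteq{\rm Br}_{\rm an}({\mathcal R})\subseteq{\rm Br}'_{\rm an}({\mathcal R})=\langle[{\mathbb P}_R]\rangle$ force equality, so ${\rm Br}_{\rm an}({\mathcal R})$ is generated by the class of ${\mathbb P}_R$. I expect the main obstacle to be the geometric identification of ${\mathbb P}$ with ${\mathbb P}_R$: one must check that the match of fibers is canonical and holomorphic in the moduli parameter, so that the full Brauer classes agree and not merely the fiberwise projective spaces.
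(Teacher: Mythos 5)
Your proposal is correct and follows essentially the same route as the paper: reduce to ${\mathcal M}_C$ via the biholomorphism \eqref{e9}, use the surjectivity of the comparison map ${\rm Br}'({\mathcal M}_C)\to{\rm Br}'_{\rm an}({\mathcal M}_C)$ together with Gabber's theorem and Theorem \ref{theo1}, and identify ${\mathbb P}$ with ${\mathbb P}_R$ under the Riemann--Hilbert correspondence. The only difference is that where the paper simply cites Schr\"oer for the surjectivity of the algebraic-to-analytic comparison, you re-derive it via the Kummer sequences and Artin comparison, which is a valid (and essentially the standard) proof of that cited fact.
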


\begin{proof}
{}From the biholomorphism in \eqref{e9}, the group ${\rm Br}'_{\rm an}({\mathcal R})$
coincides with the
analytic Brauer group ${\rm Br}'_{\rm an}({\mathcal M}_C)$ of the stable moduli space
${\mathcal M}_C$. Moreover, the forgetful map ${\rm Br}'({\mathcal M}_C)\,
\longrightarrow\, {\rm Br}'_{\rm an}({\mathcal M}_C)$ is surjective
\cite[p.~879, Proposition~1.3]{Sc}. Then, since ${\rm Br}'({\mathcal M}_C)\,=\,
{\rm Br}({\mathcal M}_C)$, we conclude that ${\rm Br}'_{\rm an}({\mathcal R})$
is a quotient of ${\rm Br}'({\mathcal M}_C)$. Therefore, from the first part of
Theorem \ref{theo1} it follows that
${\rm Br}'_{\rm an}({\mathcal R})$ is a quotient of the cyclic
group ${\mathbb Z}/\tau\mathbb Z$.

The isomorphism in \eqref{e9} takes the projective bundle $\mathbb{P}_R$ constructed
in \eqref{e10} holomorphically to the projective bundle ${\mathbb P}$ of \eqref{e3}.
Therefore, from the second part of Theorem \ref{theo1}
it follows that ${\rm Br}'_{\rm an}({\mathcal R})$ is generated by the class of
$\mathbb{P}_R$.
\end{proof}
Note that whilst the natural homomorphism ${\rm Br}_{\rm an}({\mathcal R})\,\longrightarrow\,
{\rm Br}'_{\rm an}({\mathcal R})$ is injective \cite[p.~878]{Sc}, Theorem \ref{thm1}
implies that this homomorphism is surjective and thus the following corollary
is proved.

\begin{corollary}\label{cor1}
The analytic Brauer group ${\rm Br}_{\rm an}({\mathcal R})$ coincides with
${\rm Br}'_{\rm an}({\mathcal R})$.
\end{corollary}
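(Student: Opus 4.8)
The plan is to show that the natural homomorphism
$\phi\,:\,{\rm Br}_{\rm an}({\mathcal R})\,\longrightarrow\,{\rm Br}'_{\rm an}({\mathcal R})$,
which is injective by \cite[p.~878]{Sc}, is in fact also surjective; combined with injectivity this will immediately yield the claimed equality.

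First I would record that the projective bundle $\mathbb{P}_R$ of \eqref{e10} is a genuine holomorphic $\mathrm{PGL}$--bundle, since it is the bundle associated to the algebraic principal $\mathrm{PSL}(r,\mathbb C)$--bundle ${\mathcal H}\,\longrightarrow\,{\mathcal R}$ via the standard action on $\mathbb{C}\mathbb{P}^{r-1}$. Hence its class is defined directly as an element of the (honest) analytic Brauer group ${\rm Br}_{\rm an}({\mathcal R})$, and $\phi$ carries this class to the class of $\mathbb{P}_R$ in ${\rm Br}'_{\rm an}({\mathcal R})$.

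Next I would invoke Theorem \ref{thm1}. Its second assertion gives that the class of $\mathbb{P}_R$ generates ${\rm Br}_{\rm an}({\mathcal R})$, while the discussion in its proof shows that under the biholomorphism \eqref{e9} the bundle $\mathbb{P}_R$ corresponds to the bundle $\mathbb{P}$ of \eqref{e3}, whose class $\beta$ generates ${\rm Br}'({\mathcal M}_C)\,=\,{\rm Br}({\mathcal M}_C)$, so that the class of $\mathbb{P}_R$ also generates ${\rm Br}'_{\rm an}({\mathcal R})$. Thus $\phi$ sends a generator of the source to a generator of the target. Since the image of $\phi$ is a subgroup containing a generating element of ${\rm Br}'_{\rm an}({\mathcal R})$, the homomorphism $\phi$ is surjective.

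Combining surjectivity with injectivity shows that $\phi$ is an isomorphism, so ${\rm Br}_{\rm an}({\mathcal R})\,=\,{\rm Br}'_{\rm an}({\mathcal R})$. The only point requiring care --- and it is a mild one here --- is to ensure that the generator of the cohomological Brauer group actually arises from an honest projective (Azumaya) bundle rather than from a merely cohomological torsion class; this is precisely what the explicit construction of $\mathbb{P}_R$ as the associated bundle of ${\mathcal H}$ in \eqref{e10} guarantees, and it is the step that closes the gap between ${\rm Br}_{\rm an}$ and ${\rm Br}'_{\rm an}$.
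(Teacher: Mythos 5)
Your argument is correct and is essentially the paper's own: both establish that the injective natural map ${\rm Br}_{\rm an}({\mathcal R})\to{\rm Br}'_{\rm an}({\mathcal R})$ is surjective because the generator of ${\rm Br}'_{\rm an}({\mathcal R})$ produced in (the proof of) Theorem \ref{thm1} is the class of the honest projective bundle $\mathbb{P}_R$, hence lies in the image. Your explicit remark that the key point is the generator being represented by a genuine associated bundle is exactly the content the paper compresses into ``Theorem \ref{thm1} implies that this homomorphism is surjective.''
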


Regarding the above Corollary \ref{cor1} it should be clarified that the analog of Gabber's
theorem, which would say that the Brauer group coincides with the cohomological Brauer group, is not
available in the analytic category.

\subsection{Brauer group of moduli spaces of Higgs bundles}\label{sec2.3}

We shall now consider Higgs bundles on a compact Riemann surface. 
As in Section \ref{se2.1}, consider a compact connected Riemann surface $X$ of genus $g \geq 3$, and $x_0\,\in\, X$ a base point. Let ${\mathcal M}_H$ denote the moduli space of stable
Higgs bundles on $X$ of the form $(E,\, \Phi)$, where $E$ is a holomorphic vector bundle
of rank $r$ with $\bigwedge\nolimits^r E\,=\, {\mathcal O}_X(\dd x_0)$, and $\Phi$ is a
Higgs field on $X$ with $\text{Tr}(\Phi)\,=\, 0$. The moduli space ${\mathcal M}_H$ is an
irreducible smooth complex quasiprojective variety of dimension $2(r^2-1)(g-1)$.

Consider the moduli space $\mathcal N$ from \eqref{e5}, for which the total space of the algebraic
cotangent bundle $T^*\mathcal N$ is embedded in in ${\mathcal M}_H$ as a Zariski open
subset. The codimension of the complement ${\mathcal M}_H\setminus T^*\mathcal N$
is at-least two \cite{Hi}. Therefore, by purity of cohomology, and Lemma
\ref{lem1}, one has that
$$
\text{Br}(\mathcal N)\,=\, \text{Br}(T^*\mathcal N)\,=\, \text{Br}({\mathcal M}_H)\, ;
$$
as before we use that $\mathcal N$ is simply connected.

Hence we have the following:

\begin{proposition}\label{mas}
The Brauer group ${\rm Br}({\mathcal M}_H)$ is the cyclic group of order
${\rm g.c.d.}(r,\dd)$.
Furthermore, ${\rm Br}({\mathcal M}_H)$ is generated by the class of the projective
bundle on ${\mathcal M}_H$ obtained by restricting to
$\{x\}\times {\mathcal M}_H$ the universal projective bundle on $X\times {\mathcal M}_H$.
\end{proposition}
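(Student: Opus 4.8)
The plan is to reduce the computation to the already-known Brauer group of $\mathcal N$, exactly along the lines of the proof of Theorem \ref{theo1}, using the open embedding $T^*\mathcal N\hookrightarrow\mathcal M_H$ recalled above. First I would make the identification of $T^*\mathcal N$ with an open subvariety of $\mathcal M_H$ precise: since $\mathcal N$ parametrizes bundles of fixed determinant, Serre duality identifies the fiber of $\Omega^1_{\mathcal N}$ over a stable bundle $V$ with $H^0(X,\,\text{ad}(V)\otimes K_X)$, which is exactly the space of trace-free Higgs fields on $V$. Thus the forgetful map $(E,\,\Phi)\mapsto E$ identifies the locus of stable Higgs bundles whose underlying bundle is itself stable---open by the openness of stability---with the total space of the projection $q\,:\,T^*\mathcal N\to\mathcal N$.

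With this in hand the group computation is formal. The complement $\mathcal M_H\setminus T^*\mathcal N$ has codimension at least two \cite{Hi}, so cohomological purity \cite[p.~241, Theorem~VI.5.1]{Mi}, just as in \eqref{e6}, gives that restriction $\text{Br}(\mathcal M_H)\to\text{Br}(T^*\mathcal N)$ is an isomorphism. The projection $q$ exhibits $T^*\mathcal N$ as a (trivial) torsor for the vector bundle $\Omega^1_{\mathcal N}$, and since $\mathcal N$ is simply connected \cite[p.~266, Proposition~1.2(b)]{BBGN}, Lemma \ref{lem1} shows that $q^*\,:\,\text{Br}(\mathcal N)\to\text{Br}(T^*\mathcal N)$ is an isomorphism. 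Composing, $\text{Br}(\mathcal M_H)\cong\text{Br}(\mathcal N)$, which is cyclic of order $\tau={\rm g.c.d.}(r,\dd)$ by \cite[p.~267, Theorem~1.8]{BBGN}.

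For the generator I would transport the universal projective bundle through these isomorphisms, mirroring the end of the proof of Theorem \ref{theo1}. Write $\mathbb P_H\to\mathcal M_H$ for the projective bundle of the statement, namely the restriction to $\{x\}\times\mathcal M_H$ of the universal projective bundle on $X\times\mathcal M_H$, where $x$ is the point fixed in \eqref{e3}. Let $\widehat{\mathbb P}\to X\times\mathcal N$ be the universal projective bundle---its projectivization exists even when no universal vector bundle does---with restriction $\mathbb P'\to\mathcal N$ over $\{x\}$ generating $\text{Br}(\mathcal N)$. Over $X\times T^*\mathcal N$ the underlying bundle of the universal Higgs bundle is pulled back from $X\times\mathcal N$, so $(\text{Id}_X\times q)^*\widehat{\mathbb P}$ agrees with the restriction to $X\times T^*\mathcal N$ of the universal projective bundle on $X\times\mathcal M_H$; restricting to $\{x\}$ shows that $q^*\mathbb P'$ equals the restriction of $\mathbb P_H$ to $T^*\mathcal N$. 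Since the composite isomorphism carries the generator $\mathbb P'$ to the class of $\mathbb P_H$, the latter generates $\text{Br}(\mathcal M_H)$.

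The main obstacle is not the formal transport of Brauer groups, which is identical to the connections case treated in Theorem \ref{theo1}, but rather the two geometric inputs specific to $\mathcal M_H$: the codimension-two estimate for the complement $\mathcal M_H\setminus T^*\mathcal N$, and the compatibility of the universal projective bundles on $X\times\mathcal M_H$ and $X\times\mathcal N$. Both rest on Hitchin's description of the Higgs moduli space \cite{Hi}, and verifying them carefully---rather than any cohomological subtlety---is where the real work lies.
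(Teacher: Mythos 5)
Your proposal follows the paper's own proof exactly: identify $T^*\mathcal N$ as an open subset of ${\mathcal M}_H$ with complement of codimension at least two by \cite{Hi}, apply cohomological purity and Lemma \ref{lem1} (using the simple connectedness of $\mathcal N$) to get $\text{Br}({\mathcal M}_H)\,\cong\,\text{Br}(\mathcal N)$, and transport the generator from \cite{BBGN}. The paper states this more tersely, leaving the generator transport implicit, but the argument is the same.
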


\section{Generalizations to principal bundles} \label{more}

Let $G$ be a semisimple simply connected affine algebraic group defined over 
$\mathbb C$. The topological types of principal $G$--bundles on $X$ are 
parametrized by $\pi_1(G)$, which is a finite abelian group. Let 
$\overline{\mathcal M}_C(G)$ denote the moduli space of pairs of the form $(E_G,\, 
D)$, where $E_G$ is a topologically trivial holomorphic principal $G$--bundle on
$X$, and $D$ is a holomorphic connection on $E_G$. Following the notation from the
previous sections, let
$$
{\mathcal M}_C(G)\, \subset\, \overline{\mathcal M}_C(G)
$$
be the smooth locus of $\overline{\mathcal M}_C(G)$.

The center of $G$ will be denoted by $Z(G)$. A stable principal $G$--bundle
is called {\it regularly stable} if its automorphism group coincides with $Z(G)$.
We shall denote by ${\mathcal N}_G$ the moduli space of regularly stable principal
$G$--bundles on $X$ that are topologically trivial.
Recall from \cite{BH} that the Brauer group $\text{Br}({\mathcal N}_G)$ can be identified with
the dual group\linebreak $Z(G)^\vee\,=\, \text{Hom}(Z(G),\, {\mathbb C}^*)$, and $\text{Br}
({\mathcal N}_G)$ is generated by the tautological
$Z(G)$--gerbe on ${\mathcal N}_G$ defined by the moduli stack of regularly
stable topologically trivial principal $G$--bundles on $X$. Note that given any homomorphism
$Z(G)^\vee\, \longrightarrow\, {\mathbb G}_m$, using extension of structure group
the above $Z(G)$--gerbe produces a ${\mathbb G}_m$--gerbe on ${\mathcal N}_G$.

\begin{proposition}\label{prop1c}
The Brauer group ${\rm Br}({\mathcal M}_C(G))$ is isomorphic to
the dual group $Z(G)^\vee$ and is generated by
the tautological $Z(G)$--gerbe on ${\mathcal M}_C(G)$.
\end{proposition}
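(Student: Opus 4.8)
The plan is to mirror the proof of Theorem \ref{theo1} verbatim, replacing the moduli space $\mathcal N$ of stable vector bundles by the moduli space $\mathcal N_G$ of regularly stable topologically trivial principal $G$--bundles, and replacing the computation $\text{Br}(\mathcal N)\cong\mathbb Z/\tau\mathbb Z$ of \cite{BBGN} by the identification $\text{Br}(\mathcal N_G)\cong Z(G)^\vee$ of \cite{BH}. First I would single out the Zariski open subset $\mathcal U_G\,\subset\,\mathcal M_C(G)$ consisting of those pairs $(E_G,\,D)$ for which the underlying $G$--bundle $E_G$ is regularly stable; openness of (regular) stability makes $\mathcal U_G$ open in $\mathcal M_C(G)$.

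Next I would analyze the forgetful morphism $p\,:\,\mathcal U_G\,\longrightarrow\,\mathcal N_G$ sending $(E_G,\,D)$ to $E_G$. Surjectivity of $p$ follows from the fact that a regularly stable (hence polystable) $G$--bundle carries a flat, and in particular holomorphic, connection: by Ramanathan's extension of the Narasimhan--Seshadri correspondence such a bundle arises from an irreducible representation of $\pi_1(X)$ into a maximal compact subgroup of $G$, equivalently its Atiyah class vanishes. The fiber of $p$ over a point $E_G$ is the affine space of holomorphic connections on $E_G$, which is modeled on $H^0(X,\,\text{ad}(E_G)\otimes K_X)$. Since the tangent space to $\mathcal N_G$ at $E_G$ is $H^1(X,\,\text{ad}(E_G))$ and $\text{ad}(E_G)$ is self-dual (the Killing form, $G$ being semisimple), Serre duality identifies $H^0(X,\,\text{ad}(E_G)\otimes K_X)$ with the fiber of $\Omega^1_{\mathcal N_G}$ at $E_G$. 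Thus $p$ exhibits $\mathcal U_G$ as a torsor over $\mathcal N_G$ for the algebraic vector bundle $\Omega^1_{\mathcal N_G}$, exactly as in \eqref{e5}.

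The two homological inputs then finish the argument. The complement $\mathcal M_C(G)\setminus\mathcal U_G$ should have codimension at least two, so cohomological purity (\cite[p.~241, Theorem~VI.5.1]{Mi}) yields an isomorphism $\text{Br}(\mathcal M_C(G))\,\cong\,\text{Br}(\mathcal U_G)$; and $\mathcal N_G$ is simply connected, so Lemma \ref{lem1} applied to the torsor $p$ gives $\text{Br}(\mathcal N_G)\,\cong\,\text{Br}(\mathcal U_G)$. Combining these with $\text{Br}(\mathcal N_G)\,\cong\,Z(G)^\vee$ from \cite{BH} produces the desired isomorphism $\text{Br}(\mathcal M_C(G))\,\cong\,Z(G)^\vee$. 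For the generator, I would note that the tautological $Z(G)$--gerbe on $\mathcal N_G$ pulls back under $p$ to the restriction of the tautological $Z(G)$--gerbe on $\mathcal M_C(G)$, so that the composite isomorphism carries the generator of $\text{Br}(\mathcal N_G)$ onto the claimed generator.

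The main obstacle will be the two geometric facts that are merely quoted in the $\text{SL}(r,\mathbb C)$ case. One must establish that the non-regularly-stable locus $\mathcal M_C(G)\setminus\mathcal U_G$ has codimension at least two for an arbitrary semisimple simply connected $G$ (the analog of \cite{BM1}, \cite{BM2}), and one must confirm the simple connectivity of $\mathcal N_G$ (the analog of \cite[p.~266, Proposition~1.2(b)]{BBGN}). Once these structural inputs are in hand, the remainder of the argument is formal and identical to the proof of Theorem \ref{theo1}.
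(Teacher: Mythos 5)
Your proposal is correct and is exactly what the paper intends: the paper's proof of Proposition \ref{prop1c} consists precisely of the remark that a straightforward generalization of the proof of Theorem \ref{theo1} applies, noting that ${\mathcal N}_G$ is simply connected (citing \cite{BLR}) so that Lemma \ref{lem1} can be used, with ${\rm Br}({\mathcal N}_G)\,\cong\, Z(G)^\vee$ supplied by \cite{BH}. You have in fact spelled out the two structural inputs (the codimension-two estimate for the non-regularly-stable locus and the torsor structure over $\Omega^1_{{\mathcal N}_G}$ via the Killing form and Serre duality) more explicitly than the paper does.
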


\begin{proof}
A straight-forward generalization of the proof of Theorem \ref{theo1} proves the proposition. We note that ${\mathcal N}_G$ is simply connected \cite[p.~416, Theorem~1.1]{BLR},
hence Lemma \ref{lem1} is applicable.
\end{proof}

Similarly, the (analytic) Brauer group computations in Theorem \ref{thm1} and
Section \ref{sec2.3} extend to $G$.

\section{Langlands duality and Brauer groups} \label{final}

As previously, suppose that $G$ is simply connected and let ${}^LG$ denote the
Langlands dual group. 
There is a canonical isomorphism $\pi_1( {}^LG ) \,\cong\, Z(G)^\vee$, which can be seen 
from the root-theoretic construction of the Langlands dual. We shall denote by $\mathcal{M}_H(G)$ and
$\mathcal{M}_H( {}^LG )$ the moduli spaces of Higgs bundles for the groups $G$ 
and ${}^LG$ respectively.

 It is known that the connected components of $\mathcal{M}_H( {}^LG)$ 
correspond to $\pi_1( {}^LG )$, by taking the topological class of the underlying 
principal bundle. Recall that the moduli spaces $\mathcal{M}_H(G)$ and $\mathcal{M}_H( 
{}^L G)$ satisfy SYZ mirror symmetry, that is, they are dual special Lagrangian torus 
fibrations over a common base \cite{dopa}. Under this duality, the choice of a 
connected component of $\mathcal{M}_H( {}^L G)$ corresponds to fixing a 
$\mathbb{C}^*$--gerbe on $\mathcal{M}_H( G )$. Namely, given a class in $\pi_1( {}^LG ) 
\,\cong\, Z(G)^\vee$, we obtain from the universal $G/Z(G)$--bundle on $\mathcal{M}_H(G)$ 
the corresponding $\mathbb{C}^*$--gerbe. Our computations show that every 
class in ${\rm Br}( \mathcal{M}_H(G))$ is accounted for by this correspondence.
 
\section*{Acknowledgements}

We are very grateful to T. J. Ford for a helpful correspondence. We thank the 
Institute for Mathematical Sciences in the National University of Singapore for 
hospitality while this work was being completed. IB acknowledges support
of a J. C. Bose Fellowship. This research was completed with 
support from U.S. National Science Foundation grants DMS 1107452, 1107263, 1107367 
ÒsRNMS: GEometric structures And Representation varietiesÓ (the GEAR Network). The 
work of LPS is partially supported by NSF DMS-1509693.


\end{document}